\def\NoBlackBoxes{\overfullrule0pt}
\theoremstyle{plain}
\newtheorem{theoremL}{Theorem}
\theoremstyle{definition}
\theoremstyle{main}
\let\savedef=\endproof
\def\endproof{~$\square$\savedef}
\def\bad{\spaceskip=0.33emplus0.6emminus0.15em\immediate\write5{\string\bad}}
\theoremstyle{plain}
\newtheorem{proposition}{Proposition}
\let\myh\widehat
\let\eps\varepsilon
\def\CC{\mathbb C}
\def\NN{\mathbb N}
\def\mycap{\operatorname{cap}}
\def\mcap{\operatorname{cap}}
\def\supp{\operatorname{supp}}
\def\<{\left\langle}
\def\>{\right\rangle}
\def\({\left(}
\def\){\right)}
\def\[{\left[}
\def\]{\right]}
\def\bad{\spaceskip=0.33emplus0.6emminus0.15em\immediate\write5{\string\bad}}
\def\NN{\mathbb N}
\def\CC{\mathbb C}
\let\myh\widehat
\let\myt\widetilde
\def\({\left(}
\def\){\right)}
\def\[{\left[}
\def\]{\right]}
\def\<{\left\langle}
\def\>{\right\rangle}
\let\geq\geqslant
\let\leq\leqslant
\begin{document}

\selectlanguage{english}

\title{Two examples based on the properties of discrete measures}
\author[Sergey~P.~Suetin]{Sergey~P.~Suetin}
\address{Steklov Mathematical Institute of the Russian Academy of Sciences, Russia}
\email{suetin@mi-ras.ru}

\markright{Two examples}

\date{May 15, 2021}

\begin{abstract}
In the paper we represent two  examples which are based on the properties of discrete measures.

In the first part of the paper we prove that for each probability measure   $\mu$, $\supp{\mu}=[-1,1]$, which logarithmic potential  is  a continuous function on $[-1,1]$ there exists a (discrete) measure $\sigma=\sigma(\mu)$, $\supp{\sigma}=[-1,1]$, with the following property. Let $\{P_n(x;\sigma)\}$ be the sequence of polynomials orthogonal with respect to $\sigma$. Then $\dfrac1n\chi(P_n(\cdot;\sigma))\overset{*}\to\mu$, $n\to\infty$, where $\chi(\cdot)$ is zero counting measure for the corresponding polynomial.
The construction of the measure $\sigma$ is based of the properties of weighted Leja points.

In the second part we give an  example of a compact set and a sequence of discrete measures supported on that compact set with the following property. The sequence of measures converges in weak-$*$ topology to the equilibrium measure for the compact set but the corresponding sequence of the logarithmic potentials does not converge to the equilibrium potential in any neighbourhood of the compact set.

Bibliography:~\cite{SaTo97}~titles.
\end{abstract}

\maketitle

\section{}\label{s1}

\subsection{}\label{s1s1}
Let  $\Delta=[-1,1]$ and $\mu$ be a probability measure such that $\supp{\mu}=\Delta$ and the corresponding logarithmic potential $V^\mu(z)$ of $\mu$ is a continuous function on~$\Delta$.

For a measure  $\sigma$, $\supp{\sigma}=\Delta$, we denote by $P_n(x;\sigma)=x^n+\dotsb$ the monic polynomials which are orthogonal with respect to $\sigma$,
\begin{equation}
\int_{\Delta}P_n(x;\sigma)x^k\,d\sigma(x)=0,\quad k=0,\dots,n-1.
\label{2}
\end{equation}
For an arbitrary polynomial $Q\in\CC[z]$ we denote by $\chi(Q)$ the zero  counting measure of $Q$ (in accordance with their multiplicities of zeros):
$$
\chi(Q):=\sum_{\zeta:Q(\zeta)=0}\delta_\zeta.
$$
It is well-known~\cite{Rak77},~\cite{Rak82} that if $\sigma'>0$ a.e. on $\Delta$ then
\begin{equation}
\lim_{n\to\infty}\frac{P_{n+1}(z;\sigma)}{P_n(z;\sigma)}=
\frac12(z+(z^2-1)^{1/2}),\quad z\notin\Delta
\notag
\end{equation}
(here $(z^2-1)^{1/2}/z\to1$ as $z\to\infty$). Thus we have as $n\to\infty$
\begin{equation}
\frac1n\chi(P_n(\cdot;\sigma))\overset{*}\to \tau^{}_\Delta,\quad d\tau^{}_\Delta
=\frac1\pi\frac{dx}{\sqrt{1-x^2}}.
\label{4}
\end{equation}
Notice that under a weaker condition on $\sigma$ the relation~\eqref{4} was proven in~\cite{Kor61}.

The following statement holds.
\begin{proposition}\label{pro1}
Let  $\mu$ be a probability measure such that $\supp{\mu}=\Delta$, and the logarithmic potential $V^\mu(z)$ of $\mu$ is a continuous function on $\Delta$.   Then there exists a measure $\sigma=\sigma(\mu)$, $\supp{\sigma}=\Delta$, such that (cf.~\eqref{4})
\begin{equation}
\frac1n\chi(P_n(\cdot;\sigma))\overset{*}\to \mu,
\quad n\to\infty.
\label{5}
\end{equation}
\end{proposition}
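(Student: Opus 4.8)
The plan is to realise $\mu$ as a weighted equilibrium measure, to generate the associated weighted Leja points (whose counting measures already tend to $\mu$), and then to concentrate on these points a discrete measure $\sigma$ whose masses decrease so rapidly that $P_n(\cdot;\sigma)$ is forced to be almost vanishing at the first $n$ Leja points; the zero counting measures then inherit the limit $\mu$. First I would set $Q:=-V^\mu$ and $w:=e^{V^\mu}$ on $\Delta$. Continuity of $V^\mu$ makes $w$ a positive continuous weight, so $Q$ is an admissible external field and the weighted equilibrium problem of \cite{SaTo97} is nondegenerate. Since $V^\mu+Q\equiv 0$ on $\Delta$, the measure $\mu$ satisfies the Frostman conditions (equality on $\supp\mu$ and the quasi-everywhere inequality on $\Delta$, both with constant $0$); by uniqueness of the weighted equilibrium measure $\mu_w$ of $w$ this gives $\mu=\mu_w$, with $\supp\mu_w=\Delta$.

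Next I would introduce the weighted Leja sequence $\{a_k\}_{k\ge 0}\subset\Delta$, defined greedily so that $a_n$ maximises $w(x)^n\prod_{k=0}^{n-1}|x-a_k|$ over $\Delta$. By the standard equidistribution theorem for weighted Leja (and Fekete) points, $\nu_n:=\frac1n\sum_{k=0}^{n-1}\delta_{a_k}\overset{*}\to\mu_w=\mu$; in particular the points $a_k$ are dense in $\Delta$ and, for $z\notin\Delta$, $V^{\nu_n}(z)\to V^\mu(z)$ locally uniformly.

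Then I would set $\sigma:=\sum_{k\ge 0}m_k\delta_{a_k}$ with positive summable masses $m_k$ decreasing very fast, the precise rate fixed at the end. Since $\supp\sigma=\Delta$ is infinite, the orthogonality inner product is positive definite, so $P_n(\cdot;\sigma)$ exists for every $n$ with simple zeros in $(-1,1)$ and the family $\frac1n\chi(P_n(\cdot;\sigma))$ is tight. Put $L_n(x):=\prod_{k=0}^{n-1}(x-a_k)$. Minimality of $P_n$ in $L^2(\sigma)$ gives $\|P_n\|_\sigma\le\|L_n\|_\sigma$, and because $L_n(a_k)=0$ for $k<n$ and $|a_i-a_j|\le 2$, one has $\|L_n\|_\sigma^2=\sum_{k\ge n}m_k|L_n(a_k)|^2\le 2\cdot 4^n m_n$ for fast enough decay. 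Comparing with the single term $m_j|P_n(a_j)|^2\le\|P_n\|_\sigma^2$ yields $|P_n(a_j)|\le\beta_n$ for all $j<n$, where $\beta_n$ can be made smaller than any prescribed sequence by choosing the decay of the $m_k$. Now $R_n:=P_n-L_n$ has degree $\le n-1$ and $|R_n(a_j)|=|P_n(a_j)|\le\beta_n$ at the $n$ nodes $a_0,\dots,a_{n-1}$; Lagrange interpolation on these nodes gives $|R_n(z)|\le\beta_n\Lambda_n(z)$, with $\Lambda_n$ the Lebesgue function. Fixing a compact $K\subset\CC\setminus\Delta$ and choosing the decay so fast that $\beta_n\Lambda_n(z)/|L_n(z)|\to 0$ uniformly on $K$ (possible since $|L_n(z)|\ge\operatorname{dist}(K,\Delta)^n$ while $\Lambda_n$ grows at a rate controlled by the Leja geometry alone), I get $\frac1n\log|P_n(z)/L_n(z)|\to 0$ on $K$. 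Hence $V^{\frac1n\chi(P_n)}(z)=V^{\nu_n}(z)+o(1)\to V^\mu(z)$ for every $z\notin\Delta$, and since the zeros remain in $[-1,1]$, the unicity theorem for logarithmic potentials forces every weak-$*$ limit of $\frac1n\chi(P_n(\cdot;\sigma))$ to be $\mu$, which is~\eqref{5}.

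The step I expect to be the main obstacle is the quantitative tension in the last paragraph between the speed of mass decay and the geometry of the Leja nodes: one must control $\Lambda_n$ (equivalently, secure lower bounds for $\prod_{i\ne j}|a_j-a_i|$ from the weighted Leja extremal property) well enough that a single admissible choice of masses, fixed once and for all, annihilates $R_n$ off $\Delta$ at the required rate for all $n$ simultaneously. Once this is in place, the remainder reduces to the cited weighted-equilibrium and weighted-Leja equidistribution results together with the standard unicity theorem for potentials.
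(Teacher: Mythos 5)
Your construction of $\sigma$ is the same as the paper's: weighted Leja points for the weight $w=e^{V^\mu}$ (the paper's~\eqref{8}), carrying atoms whose masses decay so fast that $P_n(\cdot;\sigma)$ is forced to behave like $L_n(z)=\prod_{k<n}(z-a_k)$; your opening observation that $\mu$ coincides with the weighted equilibrium measure $\mu_w$ (Frostman conditions with constant $0$, plus uniqueness) is in fact more careful than the paper, which passes from the Saff--Totik theorem to~\eqref{9} without comment. Where you genuinely diverge is the verification that $P_n(\cdot;\sigma)\approx L_n$. The paper argues through the zeros: for the truncated measure $\sigma_n$ the $n$th orthogonal polynomial is exactly $L_n$, and since $P_n$ depends only on the first $2n$ moments, a sufficiently small tail mass moves each zero by at most $\tfrac12\min\{q^{n^2},\delta_n\}$, uniformly over all admissible tails (the paper's~\eqref{10} and~\eqref{12}); this makes $\tfrac1n\log|P_n(z;\sigma)|$ and $\tfrac1n\log|L_n(z)|$ have the same limit. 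You instead bypass zero-perturbation theory entirely: $L^2(\sigma)$-minimality gives $m_j|P_n(a_j)|^2\le\|P_n\|_\sigma^2\le\|L_n\|_\sigma^2\le 2\cdot4^n m_n$, hence smallness of $P_n$ at the nodes, and Lagrange interpolation of $R_n=P_n-L_n$ (degree $\le n-1$, since both polynomials are monic of degree $n$) converts node values into the bound $|R_n|\le\beta_n\Lambda_n$ off $\Delta$. This is a legitimate alternative and arguably more explicit, since the paper's uniform zero-localization claim is itself only asserted via the moment argument rather than proved in detail.

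The obstacle you flag --- that a mass decay ``fixed once and for all'' must beat the unknown growth of $\Lambda_n$, i.e.\ the unknown smallness of $\prod_{i\ne j}|a_j-a_i|$ --- is not a real gap, and it dissolves exactly the way the paper's analogous issue does: the Leja points are fixed \emph{before} the masses, so at stage $n$ the quantity $\Lambda_n$ is a known finite number (the nodes are pairwise distinct because the maximum in~\eqref{8} is positive), and the masses may be chosen \emph{inductively} rather than by a rate prescribed in advance. Concretely, take an exhaustion $K_1\subset K_2\subset\dotsb$ of $\CC\setminus\Delta$ with $\operatorname{dist}(K_m,\Delta)\ge 1/m$, and at stage $n$ choose $m_n\le m_{n-1}/2$ so small that $\beta_n=\bigl(2\cdot4^n m_n/m_{n-1}\bigr)^{1/2}$ satisfies $\beta_n\,n^n\sup_{K_n}\Lambda_n\le 1/n$. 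For a fixed compact $K\subset\CC\setminus\Delta$ with $d:=\operatorname{dist}(K,\Delta)$ one has $K\subset K_N$ for some $N$, and $|L_n|\ge d^{\,n}\ge n^{-n}$ on $K$ once $n\ge\max(N,1/d)$; hence $\sup_K|R_n/L_n|\le\beta_n n^n\sup_{K_n}\Lambda_n\le 1/n\to0$, and $\tfrac1n\log|P_n/L_n|\to0$ uniformly on $K$. Note that the paper's choice of $\eps_{n+1}$ likewise depends on the realized geometry of the points (through $\delta_n$ and the localization threshold), not only on $n$, so this inductive dependence is not a defect of your route but a shared and necessary feature. With that amendment your argument is complete.
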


\subsection{}\label{s1s2}

For the proof of Proposition~\ref{pro1} we use the results which concerns the existence and properties of the weighted Leja points (see~\cite{Lej57} and~\cite[Chapter V]{SaTo97}).

Recall that Leja points for an arbitrary compact set $K\subset\CC$ belong to the boundary  $\partial K$ of $K$. They produce a sequence  $\{z_n\}_{n=1}^\infty$ and are defined by induction in the following way. Once the points  $z_1,\dots,z_n\in\partial K$ are already defined, the point $z_{n+1}\in \partial K$ is defined (not uniquely) from the relation
\begin{equation}
\prod_{j=1}^n|z_{n+1}-z_j|=\max_{z\in K}\prod_{j=1}^n|z-z_j|;
\label{6}
\end{equation}
when there are a few points satisfying the relation~\eqref{6}  we choose an arbitrary point from this set as $z_{n+1}$.

The following result was proven by F.~Leja~\cite{Lej57}.

\begin{theoremL}[F.~Leja~\cite{Lej57}]
Let $K\subset\CC$ be a regular compact set such that $\myh\CC\setminus{K}$  is a domain. Let $\{z_n\}$ be Leja points for $K$. Set $P_n(z):=\prod\limits_{k=1}^n(z-z_k)$. Then the following relation holds
\begin{equation}
\lim_{n\to\infty}\log|P_n(z)|^{1/n}=	g_K(z,\infty)-\gamma_K,
\quad z\notin{K},
\label{7}
\end{equation}
where $g_K(z,\infty)$ is the Green function for  the domain $\myh\CC\setminus{K}$, $\gamma_K$ is the Robin constant for~$K$.
\end{theoremL}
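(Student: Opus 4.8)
The plan is to recast the limit relation \eqref{7} as weak-$*$ convergence of the normalized zero counting measures of the Leja polynomials to the equilibrium measure of $K$. Write $\nu_n:=\frac1n\chi(P_n)=\frac1n\sum_{k=1}^n\delta_{z_k}$, a probability measure supported on $\partial K\subset K$, and let $\omega_K$ be the equilibrium measure of $K$. Since $\log|P_n(z)|=\sum_{k=1}^n\log|z-z_k|$, we have $\frac1n\log|P_n(z)|=\int\log|z-t|\,d\nu_n(t)$. For $K$ regular the Green function satisfies $g_K(z,\infty)-\gamma_K=\int\log|z-t|\,d\omega_K(t)$ (equivalently $g_K(z,\infty)=\gamma_K-V^{\omega_K}(z)$). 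Hence \eqref{7} is equivalent to $\int\log|z-t|\,d\nu_n(t)\to\int\log|z-t|\,d\omega_K(t)$ for every fixed $z\notin K$. Because $z\notin K$ is fixed, the function $t\mapsto\log|z-t|$ is continuous and bounded on the compact set $K$, so this convergence is an immediate consequence of $\nu_n\overset*\to\omega_K$. Thus everything reduces to proving that the zero counting measures converge weak-$*$ to $\omega_K$. As the $\nu_n$ live in the weak-$*$ compact set of probability measures on $K$, it suffices to show that every weak-$*$ limit point of $\{\nu_n\}$ equals $\omega_K$.

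To identify the limit points I would use two estimates. First, a lower bound: for the monic polynomial $P_n$, whose zeros all lie in $\partial K\subset K$, the function $u(z):=\frac1n\log|P_n(z)|-g_K(z,\infty)$ is harmonic in $\myh\CC\setminus K$, including at $\infty$ (the logarithmic poles cancel and $u(\infty)=-\gamma_K$). By the maximum principle, using regularity of $K$ so that $g_K(\cdot,\infty)\to0$ at $\partial K$, one gets $u\le\frac1n\log\|P_n\|_K$ throughout $\myh\CC\setminus K$; evaluating at $\infty$ yields $L_n:=\|P_n\|_K\ge e^{-n\gamma_K}=\mcap(K)^n$. Second, the defining greedy property \eqref{6} of Leja points gives $\prod_{j=1}^{k-1}|z_k-z_j|=\|P_{k-1}\|_K=L_{k-1}$, so grouping the Vandermonde product by largest index yields $V_n:=\prod_{1\le i<j\le n}|z_i-z_j|=\prod_{m=1}^{n-1}L_m\ge\mcap(K)^{\binom n2}$. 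Consequently the discrete energy of the configuration satisfies $E_n:=-\frac{2}{n(n-1)}\log V_n\le-\log\mcap(K)=\gamma_K$.

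I would then conclude with the principle of descent together with uniqueness of the equilibrium measure. Let $\nu$ be any weak-$*$ limit of a subsequence $\nu_{n'}$. Truncating the kernel, $k_M(s,t):=\min\{\log\frac1{|s-t|},M\}$ is continuous and bounded, and a short computation (discarding the diagonal, which contributes only $O(1/n)$) gives $\liminf_{n'}E_{n'}\ge\iint k_M\,d\nu\,d\nu$; letting $M\to\infty$ and using monotone convergence yields $\liminf_{n'}E_{n'}\ge I(\nu)$, where $I(\nu):=\iint\log\frac1{|s-t|}\,d\nu(s)\,d\nu(t)$. Combined with $E_n\le\gamma_K$ this gives $I(\nu)\le\gamma_K=I(\omega_K)$. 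Since $\omega_K$ is the \emph{unique} probability measure on $K$ minimizing the logarithmic energy, necessarily $\nu=\omega_K$. Therefore $\nu_n\overset*\to\omega_K$, and by the reduction in the first paragraph the limit \eqref{7} holds for every $z\notin K$ (in fact locally uniformly on $\myh\CC\setminus K$, by a normal-families argument applied to the harmonic functions $\frac1n\log|P_n|-g_K(\cdot,\infty)$).

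The main obstacle is the passage from the discrete configuration to the continuous energy in the middle step: the logarithmic kernel is singular on the diagonal, so one cannot simply pass to the limit in $\iint\log\frac1{|s-t|}\,d\nu_n\,d\nu_n$, and the truncation/descent argument is exactly what controls this. It is worth noting that this route proves convergence of the \emph{potentials} off $K$ without having to establish the sharper statement $L_n^{1/n}\to\mcap(K)$ for each individual $n$; the averaged bound $V_n\ge\mcap(K)^{\binom n2}$, which is all that the Leja extremal property \eqref{6} readily provides, already suffices once it is fed into the principle of descent.
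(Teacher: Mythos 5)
The paper offers no proof of this statement: it is quoted verbatim as Leja's theorem from \cite{Lej57}, and what the paper actually uses later is its weighted analogue, \cite[Chapter V, Theorem 1.1]{SaTo97}, again cited without proof. So your proposal can only be compared with the classical argument in those references, and it is correct and essentially coincides with it. All three of your ingredients are sound. (i) The maximum-principle bound $\|P_m\|_K\ge\mcap(K)^m$ (valid for any monic polynomial; regularity of $K$ just makes the boundary argument clean), combined with the greedy relation \eqref{6} --- which says precisely that $\prod_{j<k}|z_k-z_j|=\|P_{k-1}\|_K$ --- gives the Vandermonde estimate $\prod_{i<j}|z_i-z_j|\ge\mcap(K)^{n(n-1)/2}$. (ii) The truncation/descent step is the crux, and you handle the two delicate points correctly: the diagonal contributes only $M/n\to0$ for each fixed $M$, and monotone convergence sends $M\to\infty$ only \emph{after} the limit in $n$; this forces every weak-$*$ limit point $\nu$ of the counting measures to satisfy $I(\nu)\le\gamma_K$, whence $\nu=\omega_K$ by uniqueness of the energy minimizer (here $\mcap(K)>0$, guaranteed by regularity, is what makes the energy comparison meaningful). (iii) The passage from weak-$*$ convergence of the zero counting measures to convergence of the potentials at a fixed $z\notin K$ is immediate, since $t\mapsto\log|z-t|$ is continuous on $K$ and all the measures live on $K$; this reduction is, incidentally, exactly how the paper itself deduces \eqref{9.2} from \eqref{9} in the weighted setting, so your overall architecture mirrors the way the result is actually deployed in the text.
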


Following the monograph~\cite[Chapter V]{SaTo97} we define the $w$-weighted Leja points $\{x_n\}$ for the compact set $\Delta=[-1,1]$, where $w(x)=e^{V^\mu(x)}$ .Once the points  $x_1,\dots,x_n\in\Delta$ are already defined, the point $x_{n+1}\in\Delta$ is defined (non uniquely) from the relation:
\begin{equation}
e^{nV^\mu(x_{n+1})}\prod_{j=1}^n|x_{n+1}-x_j|=\max_{x\in \Delta}\(e^{nV^\mu(x)}\prod_{j=1}^n|x-x_j|\);
\label{8}
\end{equation}
as before when there are a few points satisfying to the relation~\eqref{8}  we choose an arbitrary point from this set as $x_{n+1}$.

In~\cite [Chapter V, p.~258, Theorem 1.1]{SaTo97} a general theorem on the limit properties of weighted Leja points was proven. From this theorem it follows  that for the sequence $\{x_n\}$ which satisfies the condition~\eqref{8} the following relation holds
\begin{equation}
\frac1n\sum_{k=1}^n\delta_{x_k}\overset{*}\to \mu,\quad n\to\infty.
\label{9}
\end{equation}
Thus we have that
\begin{equation}
\lim_{n\to\infty}\log\biggl|\prod_{k=1}^n(z-x_k)\biggr|^{1/n}=-V^\mu(z),
\quad z\in\myh\CC\setminus\Delta.
\label{9.2}
\end{equation}

%\subsection{}\label{s1s3}

\begin{proof}[of Proposition~\ref{pro1}]
Let $q\in(0,1/2)$. Below we construct a sequence of positive numbers $\eps=\{\eps_n\}_{n=1}^\infty$  with the following properties:

1) $\eps_n\downarrow0$ as $n\to\infty$,

2) $\sum\limits_{n=1}^\infty\eps_n<\infty$,

3) $\sum\limits_{k=n+1}^\infty\eps_k<\eps_n$.

Notice that the above conditions 1)--3) are fulfilled in particular for the sequence $\eps_n=q^{n^2}$, $q\in(0,1/2)$. This fact will also be used in the future.

Let fix some $n_0\in\NN$ and let $\eps_1,\dots,\eps_{n_0}$  be some positive numbers. Let $\{x_n\}$ be a sequence of $w$-weighted Leja points for $\Delta$ in the sense of the definition~\eqref{8} and~\eqref{9}.
Set $\sigma_n:=\sum\limits_{k=1}^n\eps_k\delta_{x_k}$ for $n=n_0$. It is clear that $P_n(x;\sigma_n)=(x-x_1)\dots(x-x_n)$.
Let now $\delta_n:=\min\limits_{1\leq k<s\leq n}|x_k-x_s|$.
Let choose a number $\eps_{n+1}\in(0,q^{n^2}\eps_n)$ such that for each measure  $\beta$ of the type $\beta=\sigma_n+2\eps_{n+1}\nu=\sum\limits_{k=1}^n\eps_k\delta_{x_k}+2\eps_{n+1}\nu$, $\supp{\nu}\subset\Delta$, $\nu(1)\leq1$ the following property holds. If $P_n(x;\beta)=\prod\limits_{k=1}^n(x-\myt{x}_k)$ is the $n$\,th polynomials orthogonal with respect to $\beta$, then
\begin{equation}
|x_k-\myt{x}_k|<\frac12\min\{q^{n^2},\delta_n\},\quad k=1,\dots,n,
\label{10}
\end{equation}
uniformly for all measures $\nu$, $\supp{\nu}\subset\Delta$, $\nu(1)\leq1$.
This can always be done because the $n$\,th orthogonal polynomial $P_n(x;\beta)$ depends on the first $2n$ moments of the measure $\beta$ only.

Now set $\sigma_{n+1}:=\sum\limits_{k=1}^{n+1}\eps_k\delta_{x_k}$
and let continue the above procedure further. As the result we will get a measure
\begin{equation}
\sigma=\sum_{n=1}^\infty\eps_n\delta_{x_n},
\label{11}
\end{equation}
with the following properties. Measure $\sigma$ supported on $\Delta$, $\supp{\sigma}=\Delta$, and for each $n\geq n_0$ for the zeros $x_{n,k}$, $k=1,\dots,n$ of the corresponding $n$\,th orthogonal polynomial $P_n(x;\sigma)=x^n+\dotsb=(x-x_{n,1})\dotsb(x-x_{n,n})$ the following relation holds
\begin{equation}
|x_k-x_{n,k}|<q^{n^2},\quad k=1,\dots,n.
\label{12}
\end{equation}

Hence, we have uniformly on $z\in K\Subset\CC\setminus\Delta$
\begin{align}
\log|P_n(z;\sigma)|
&=\sum_{k=1}^n\log|z-x_k+(x_k-x_{n,k})|\notag\\
&=\sum_{k=1}^n\(\log|z-x_k|+\log|1+(x_k-x_{n,k})/(z-x_k)|\)
\notag\\
&=\log\biggl|\prod_{k=1}^n(z-x_k)\biggr|+\sum_{k=1}^n\log|1+O(q^{n^2})|.
\label{14}
\end{align}
From this it follows that uniformly on $z\in K\Subset\CC\setminus\Delta$  (cf.~\eqref{7}) 
\begin{equation}
\lim_{n\to\infty}
\log|P_n(z;\sigma)|^{1/n}=-V^{\mu}(z).
\label{15}
\end{equation}
Therefore as $n\to\infty$
$$
\frac1n\chi\bigl(P_n(\cdot;\sigma)\bigr)\overset{*} \to\mu.
$$
Relation~\eqref{5} is proven.
\end{proof}

\section{}\label{s2}

In this section we construct two  examples of compact sets and sequences of discrete measures supported on those compact sets with the following property. These sequences converge in weak-$*$ topology to the equilibrium measures for the compact sets but the corresponding sequences of the logarithmic potentials do not converge to the equilibrium potentials in any neighbourhood of the compact sets.

The construction is based on the following well-known property of the logarithmic capacity (see.~\cite[Chapter~VII, \S~1, Theorem~2]{Gol66}).
Let $P_n(z)=z^n+\dotsb$ be an arbitrary monic polynomial of degree~${n}$. Then for each non generated compact set $F\subset\CC$ and the corresponding $E=\left\{z:P_n(z)\in{F}\right\}$ we have that
\begin{equation}
\mycap(E)=\mycap(F)^{1/n}.
\label{Gol1}
\end{equation}
In particularly for the case of a disk $F=\{w:|w|\leq{\rho^n}\}$ we obtain
$$
\mycap\left\{z:\bigl|P_n(z)\bigr|\leq{\rho^n}\right\}=\rho.
$$

The following statement was given by Herbert Stahl in~\cite[p.~166]{Sta97} but without details. 
\begin{proposition}\label{sta1}
Let $\Gamma=\{z:|z|=1\}$  be the unit circle, $\{\mu_n\}$ be the sequence of probability measures associated with the zeros of  monic polynomials $\{z^n-1\}$:
$$
\mu_n=\frac1n\sum_{j=1}^n\delta_{z_{n,j}}
$$
($\{z_{n,j}\}$ are the roots of degree $n$ from $1$).
Let $\lambda_\Gamma$ be the equilibrium measure for the unit circle~$\Gamma$. Then  $\mu_n\overset{*}\to\lambda_\Gamma$ as $n\to\infty$,~but
$$
V^{\mu_n}(z)\overset{\mycap}{\not\rightarrow} V^{\lambda_\Gamma}(z)
$$
in any neighbourhood of the compact set $\Gamma$.
\end{proposition}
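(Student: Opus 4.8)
The plan is to reduce everything to the explicit formula for the potentials and then to isolate a family of ``bad'' sets of capacity bounded away from zero that cluster on $\Gamma$. Since the zeros $z_{n,j}$ are exactly the roots of $z^n-1$, the monic polynomial attached to $\mu_n$ is $z^n-1$, so
\begin{equation}
V^{\mu_n}(z)=-\frac1n\log\bigl|z^n-1\bigr|,
\notag
\end{equation}
while the equilibrium potential of the unit circle is $V^{\lambda_\Gamma}(z)=-\log^+|z|$, i.e.\ $V^{\lambda_\Gamma}\equiv0$ on $\{|z|\le1\}$ and $V^{\lambda_\Gamma}(z)=-\log|z|$ for $|z|>1$. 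The weak-$*$ convergence $\mu_n\overset{*}\to\lambda_\Gamma$ I would obtain directly: for each integer $k\ne0$ the Fourier coefficient $\int z^k\,d\mu_n=\frac1n\sum_{j}z_{n,j}^k$ vanishes once $n>|k|$, which matches $\int z^k\,d\lambda_\Gamma=0$; since trigonometric polynomials are dense in $C(\Gamma)$ this gives weak-$*$ convergence (equivalently, a Riemann-sum argument $\frac1n\sum_j f(z_{n,j})\to\frac1{2\pi}\int_0^{2\pi}f(e^{i\theta})\,d\theta$).

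For the failure of convergence in capacity, fix $\rho\in(0,1)$ and consider the sublevel sets
\begin{equation}
E_n(\rho):=\bigl\{z\in\CC:\ |z^n-1|\le\rho^n\bigr\}.
\notag
\end{equation}
Applying the capacity formula~\eqref{Gol1} with $P_n(z)=z^n-1$ and $F=\{|w|\le\rho^n\}$ (so $\mycap F=\rho^n$) yields $\mycap E_n(\rho)=(\rho^n)^{1/n}=\rho$, independent of $n$. This is the crucial point: the bad sets carry a fixed positive capacity.

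Next I would locate $E_n(\rho)$ and estimate the two potentials on it. From $|z^n-1|\le\rho^n$ and the reverse triangle inequality one gets $\bigl||z|^n-1\bigr|\le\rho^n$, whence $(1-\rho^n)^{1/n}\le|z|\le(1+\rho^n)^{1/n}$; both bounds tend to $1$, so $E_n(\rho)$ is contained in an annulus shrinking to $\Gamma$ and therefore $E_n(\rho)\subset U$ for every fixed neighbourhood $U$ of $\Gamma$ and all large $n$. On $E_n(\rho)$ we have $\log|z^n-1|\le n\log\rho$, hence $V^{\mu_n}(z)\ge\log(1/\rho)>0$, while $V^{\lambda_\Gamma}(z)=-\log^+|z|\le0$ everywhere. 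Consequently, with $\delta:=\log(1/\rho)$,
\begin{equation}
E_n(\rho)\subset\bigl\{z\in U:\ |V^{\mu_n}(z)-V^{\lambda_\Gamma}(z)|\ge\delta\bigr\},
\notag
\end{equation}
so by monotonicity of capacity the exceptional set in $U$ has capacity $\ge\rho$ for all large $n$. Since this does not tend to $0$, convergence in capacity fails in $U$; as $U$ was an arbitrary neighbourhood of $\Gamma$, the claim follows.

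The substance of the argument is the observation that off $\Gamma$ the potentials do converge (pointwise, even locally uniformly), so the obstruction lives entirely in arbitrarily thin annuli about $\Gamma$; the main technical step is therefore the right choice of the sublevel sets $E_n(\rho)$ together with the capacity identity~\eqref{Gol1}, which simultaneously forces these sets to hug $\Gamma$ while keeping their capacity pinned at the fixed value $\rho$. I expect the only point requiring care to be the bookkeeping in the definition of convergence in capacity: one must produce a single threshold $\delta$ for which the exceptional sets fail to be capacity-null along the whole sequence, which is exactly what the constant value $\mycap E_n(\rho)=\rho$ provides.
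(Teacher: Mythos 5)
Your proof is correct, and it follows the same overall strategy as the paper: write both potentials explicitly, exhibit a family of sublevel sets that cluster on $\Gamma$, and pin their capacity from below using the polynomial-preimage identity \eqref{Gol1}. The difference is in how \eqref{Gol1} is invoked, and your choice is cleaner. The paper works only on $|z|\geq 1$, realizes the bad set as the preimage under $p_n(z)=z^n$ of the lens-shaped region $F_n=\{w:|w|\geq1,\ |w-1|\leq e^{-n\eps}\}$, and must then estimate $\tfrac14e^{-n\eps}<\mycap(F_n)<e^{-n\eps}$, ending with the lower bound $(1/4)^{1/n}e^{-\eps}$. You instead take the preimage under the monic polynomial $z^n-1$ of the disk $\{w:|w|\leq\rho^n\}$, so \eqref{Gol1} gives the exact value $\mycap E_n(\rho)=\rho$ with no capacity estimate at all --- this is precisely the special case the paper records immediately after \eqref{Gol1} but then does not use. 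Your sign argument ($V^{\mu_n}\geq\log(1/\rho)>0\geq V^{\lambda_\Gamma}$ on $E_n(\rho)$) replaces the paper's restriction to $|z|\geq1$, and your annulus bound $(1-\rho^n)^{1/n}\leq|z|\leq(1+\rho^n)^{1/n}$ makes explicit why the bad sets lie inside every neighbourhood of $\Gamma$ for large $n$, a point the paper handles only implicitly through the disk $K_\rho$. You also supply the weak-$*$ convergence via moments, which the paper asserts without proof. Both routes yield the same conclusion; yours trades the lens-capacity estimate for an exact computation, and since the threshold $\delta=\log(1/\rho)$ sweeps all of $(0,\infty)$ as $\rho$ ranges over $(0,1)$, your argument rules out convergence in capacity for every threshold, exactly as required.
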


\begin{proof}
Let fix an arbitrary $\eps>0$ and consider the set
$$
E_n:=\left\{z:|z|\geq1,
\bigl|V^{\mu_n}(z)-V^{\lambda_\Gamma}(z)\bigr|\geq\eps\right\},\quad\text{where}
\quad V^{\lambda_\Gamma}(z)=\log\frac1{|z|},\quad|z|\geq1.
$$
We have that
\begin{equation}
V^{\mu_n}(z)-V^{\lambda_\Gamma}(z)
=\frac1{n}\sum_{j=1}^n\log\frac1{|z-z_{n,j}|}-\log\frac1{|z|}\notag\\
=\frac1{n}\log\frac{|z|^n}{|z^n-1|}.
\notag
\end{equation}
Therefore
\begin{align}
E_n&=\left\{z:|z|\geq1,\biggl|\frac1{n}\log\frac{|z^n|}{|z^n-1|}\biggr|\geq\eps\right\}
\supset\left\{z:|z|\geq1,\frac1{n}\log\frac{|z^n|}{|z^n-1|}\geq\eps\right\}
\notag\\
&=\left\{z:|z|\geq1,\,|z^n-1|\leq|z^n|{e^{-n\eps}}\right\}
\supset\left\{z:|z^n|\geq1,\,|z^n-1|\leq{e^{-n\eps}}\right\}\notag\\
&=\left\{z:p_n(z)\in F_n\right\}=:\myt{E}_n,\notag
\end{align}
where $F_n=\{w:|w|\geq1,|w-1|\leq{e^{-n\eps}}\}$, $w=p_n(z)\equiv{z^n}$.
Let  $\rho>1$ and $K_\rho:=\{z: |z|\leq\rho\}$. Then for  all natural $n$ large enough, $n\geq n_0$,  we have that $\myt{E}_n\subset  K_\rho$. Hence $E_n\cap K_\rho\supset \myt{E}_n$ for $n\geq n_0$. Therefore  from~\eqref{Gol1} it follows that
$$
\mycap(E_n\cap K_\rho)\geq\mcap{\myt{E}_n}=\mycap(F_n)^{1/n}.
$$
It is easy to see that $\displaystyle\frac14e^{-n\eps}<\mycap(F_n)<e^{-n\eps}$. Thus
$$
\mycap\(E_n\cap K_\rho\)\geq\(\frac14\)^{1/n}e^{-\eps}\not\rightarrow0\quad \text{as $n\to0$}.
$$
\end{proof}

\begin{proposition}\label{sta2}
Let $K=[-1,1]$, $\{\mu_n\}$ be the sequence of probability measures associated with Chebyshev polynomials of first kind for $K$. Let $\lambda_K$ be the equilibrium measure for the segment~$K$.
Then $\mu_n\overset{*}\to\lambda_K$ as $n\to\infty$, but
$$
V^{\mu_n}(z)\overset{\mycap}{\not\rightarrow} V^{\lambda_K}(z)
$$
in any neighbourhood of the compact set $K$.
\end{proposition}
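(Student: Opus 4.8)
The plan is to follow the template of Proposition~\ref{sta1}; the segment case turns out to be even cleaner, because $\myh\CC\setminus{K}$ is a single domain and the equilibrium potential is given by one formula on all of $\CC$, so there is no separate interior region to treat.

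First I would record the objects attached to $K=[-1,1]$: the equilibrium measure $\lambda_K$ is the arcsine distribution, the Robin constant is $\gamma_K=\log2$ (so $\mycap(K)=1/2$), and
$$
V^{\lambda_K}(z)=\gamma_K-g_K(z,\infty)=\log2-g_K(z,\infty),\qquad g_K(z,\infty)=\log\bigl|z+(z^2-1)^{1/2}\bigr|,
$$
an identity valid for every $z\in\CC$ (with $g_K\equiv0$ on $K$). Let $\myt T_n(z):=2^{1-n}T_n(z)=\prod_{j=1}^n(z-x_{n,j})$ be the monic Chebyshev polynomial, whose zeros $x_{n,j}$ carry $\mu_n$; the weak-$*$ convergence $\mu_n\overset{*}\to\lambda_K$ is the classical equidistribution of Chebyshev nodes, which I would merely cite. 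Since $\mu_n$ is the normalized zero counting measure of $\myt T_n$, we have $V^{\mu_n}(z)=-\frac1n\log|\myt T_n(z)|$, whence
$$
V^{\mu_n}(z)-V^{\lambda_K}(z)=-\frac1n\log|\myt T_n(z)|-\log2+g_K(z,\infty).
$$

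The key step is to produce, for a fixed $\eps>0$, a lemniscate on which this difference is $\ge\eps$ while its capacity stays bounded away from $0$. I would take
$$
S_n:=\Bigl\{z:|\myt T_n(z)|\le(\tfrac12 e^{-\eps})^n\Bigr\}=\myt T_n^{-1}\Bigl(\bigl\{w:|w|\le(\tfrac12 e^{-\eps})^n\bigr\}\Bigr).
$$
On $S_n$ the identity above together with $g_K\ge0$ gives $V^{\mu_n}(z)-V^{\lambda_K}(z)\ge(\eps+\log2)-\log2=\eps$. For the capacity, $\myt T_n$ is monic of degree $n$ and the target disk has capacity $(\tfrac12 e^{-\eps})^n$, so relation~\eqref{Gol1} gives $\mycap(S_n)=\tfrac12 e^{-\eps}$, a positive constant independent of~$n$.

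The remaining point, which is the only one requiring genuine care, is to check that the lemniscates $S_n$ actually collapse onto $K$, so that the failure of convergence is confined to an arbitrarily small neighbourhood. Since $|\myt T_n(z)|^{1/n}\to e^{-V^{\lambda_K}(z)}=\tfrac12 e^{g_K(z,\infty)}$ uniformly on compact subsets of $\myh\CC\setminus{K}$, every point with $g_K(z,\infty)\ge\delta>0$ leaves $S_n$ for all large $n$ (and one checks directly that all the $S_n$ lie in a fixed disk). Hence $S_n\subset\{z:g_K(z,\infty)<\delta\}$, and this set is contained in any prescribed neighbourhood $U$ of $K$ once $\delta$ is small enough. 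Consequently, for every neighbourhood $U$ of $K$ and all large $n$, the set $\{z\in U:|V^{\mu_n}(z)-V^{\lambda_K}(z)|\ge\eps\}$ contains $S_n$ and therefore has capacity $\ge\tfrac12 e^{-\eps}\not\to0$, which is exactly the asserted failure of convergence in capacity.
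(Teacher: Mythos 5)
Your proof is correct and follows essentially the same route as the paper: your lemniscate $S_n=\bigl\{z:|\myt T_n(z)|\le(\tfrac12 e^{-\eps})^n\bigr\}$ is exactly the paper's set $\myt E_n$, the capacity bound $\tfrac12 e^{-\eps}$ comes from the same preimage identity \eqref{Gol1}, and your localization of $S_n$ near $K$ via $n$-th root asymptotics plays the role of the paper's containment $\myt E_n\subset K_\rho$ deduced from the explicit representation $T_n(z)=2^{-n}\bigl(\varphi(z)^n+\varphi(z)^{-n}\bigr)$. The only cosmetic difference is that you invoke $g_K(z,\infty)\ge0$ where the paper uses the equivalent inequality $|\varphi(z)|\ge1$.
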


\begin{proof}
Let fix an arbitrary $\eps>0$ and consider the set
$$
E_n=\left\{z:\bigl|V^{\mu_n}(z)-V^{\lambda_K}(z)\bigr|\geq\eps\right\},
$$
where $\displaystyle V^{\lambda_K}(z)=\log\frac2{|\varphi(z)|}$, $\displaystyle\varphi(z)=z+(z^2-1)^{1/2}$. We obtain that
\begin{align}
E_n&=\left\{z:\biggl|\frac1{n}\log\frac1{|T_n(z)|}-\log\frac2{|\varphi(z)|}\biggr|\geq\eps\right\}
=\left\{z:\biggl|\frac1{n}\log\biggl|\frac{\varphi(z)^n}{2^nT_n(z)}\biggr|\biggr|\geq\eps\right\}\notag\\
&\supset\left\{z:\frac1{n}\log\biggl|\frac{\varphi(z)^n}{2^nT_n(z)}\biggr|\geq\eps\right\}
=\left\{z:\biggl|\frac{\varphi(z)^n}{2^nT_n(z)}\biggr|\geq e^{n\eps}\right\}
\notag\\
&=\left\{z:\biggl|\frac{2^nT_n(z)}{\varphi(z)^n}\biggr|\leq e^{-n\eps}\right\}
=\left\{z:\bigl|T_n(z)\bigr|\leq 2^{-n}|\varphi(z)^n|e^{-n\eps}\right\}
\notag\\
&\supset\left\{z:\bigl|T_n(z)\bigr|\leq 2^{-n}e^{-n\eps}\right\}=
\left\{z:T_n(z)\in F_n\right\}=:\myt{E}_n,
\notag
\end{align}
where $F_n:=\{w:|w|\leq 2^{-n}e^{-n\eps}\}$.

Let $K_\rho:=\{z:|\varphi(z)|\leq\rho\}$, $\rho>1$, be the closer of an ellipse.  It is well-known that for the Chebyshev polynomials $T_n(z)=z^n+\dotsb$  the explicit representation is valid
\begin{equation}
T_n(z)=\frac1{2^n}\(\varphi(z)^n+\varphi(z)^{-n}\),\quad z\notin K.
\notag
\end{equation}
From this it follows that $\myt{E}_n\subset K_\rho$ for all $n$ large enough, $n\geq n_0$. Hence $E_n\cap K_\rho\supset\myt{E}_n$ for $n\geq n_0$.
Therefore from~\eqref{Gol1} it follows that
$$
\mycap(E_n\cap K_\rho)\geq\mcap(\myt{E}_n)=\mycap(F_n)^{1/n}.
$$
Since $\mycap(F_n)^{1/n}=e^{-\eps}/2$ then
$$
\mycap\(E_n\cap K_\rho\)\geq\frac12e^{-\eps}\not\rightarrow0\quad \text{as
$n\to0$}.
$$
\end{proof}


\begin{thebibliography}{99}

\bibitem{Gol66}
\href{http://www.zentralblatt-math.org/zmath/search/?an=Zbl 0148.30603}
{G.~M.~Goluzin, ``Geometric theory of functions of a complex variable'',
\textit{Transl. Math. Monogr.}, \textbf{26}, Providence, RI, Amer. Math. Soc., 1969, vi+676.}

\bibitem{Kor61}
\href{http://www.ams.org/mathscinet-getitem?mr=186989}
{P.~P.~Korovkin, ``Asymptotic representations of polynomials minimizing an integral'', in Russian, 1961, \textit{Studies of Modern Problems of Constructive Theory of Functions}, 273--276, Fizmatgiz, Moscow.}

\bibitem{Lej57}
\href{http://www.ams.org/mathscinet-getitem?mr=0100726}
{F.~Leja, ``Sur certaines suites liees aux ensembles plans et leur application a la representation conforme'', \textit{Ann. Polon. Math.}, \textbf{4}, 1957,  8--13.}

\bibitem{Rak77}
\href{http://mi.mathnet.ru/eng/msb2806}
{E.~A.~Rakhmanov, ``On the asymptotics of the ratio of orthogonal polynomials'', \textit{Mat. Sb. (N.S.)}, 1977, \textbf{103(145)},
2(6), 237--252.}

\bibitem{Rak82}
\href{http://mi.mathnet.ru/eng/msb2240}
{E.~A.~Rakhmanov, ``On~the asymptotics of the ratio of orthogonal polynomials.~II'', \textit{Mat. Sb. (N.S.)}, 1982, \textbf{118(160)}, 1(5),
104--117.}

\bibitem{SaTo97}
\href{http://www.ams.org/mathscinet-getitem?mr=1485778 }
{Edward B. Saff, Vilmos  Totik, ``Logarithmic potentials with external fields'',
Appendix B by Thomas Bloom, \textit{Grundlehren der Mathematischen Wissenschaften [Fundamental Principles of Mathematical Sciences]},
\textbf{316}, Springer-Verlag, Berlin, 1997.}

\bibitem{Sta97}
\href{http://www.ams.org/mathscinet-getitem?mr=1484040} 
{Herbert Stahl, ``The convergence of Pad\'e approximants to functions with branch points'', \textit{J. Approx. Theory}, \textbf{91}:2, 1997,139--204}


\end{thebibliography}
\end{document}